
\documentclass{article}
 
\usepackage[latin1]{inputenc}
\usepackage[T1]{fontenc}
\usepackage[francais, english]{babel}
\usepackage{lmodern}
\usepackage{stmaryrd}
\usepackage{amsthm}
\usepackage{amsmath}
\usepackage{amssymb}
\usepackage{mathrsfs}
\usepackage{soul}
\usepackage{layout}
\usepackage{setspace}
\usepackage[top=3cm, bottom=3cm, left=3 cm, right=3 cm]{geometry}
\usepackage{pgf,tikz}
\usepackage{graphicx}
\usepackage{xcolor}
\usetikzlibrary{arrows}
\usepackage{listings}
\usepackage{enumitem}
\usepackage{hyperref}
\usepackage{cases}
\usepackage{mathabx}
\usepackage{cleveref}
\usepackage{xcolor}
\usepackage[bottom,flushmargin,hang,multiple]{footmisc}

\newtheorem{lemme}{Lemma}[section]

\newtheorem*{lemme*}{Lemme} 

\newtheorem*{rem.}{Remarque}
\newtheorem*{th*}{Theorem}
\newtheorem*{th.*}{Theorem}

\newtheorem{th.}[lemme]{Theorem}
\newtheorem{theorem}{Theorem}

\theoremstyle{definition}
\newtheorem*{def.}{Definition}

\newcommand{\R}{\mathbb{R}}
\newcommand{\Z}{\mathbb{Z}}

\newcommand{\N}{\mathbb{N}}
\newcommand{\D}{\mathbb{D}}

\newcommand{\supp}{\text{supp }}

\title{Transience in law for  symmetric random walks in infinite measure}
\author{Timothée Bénard}
\date{}

\begin{document}
\maketitle

\bigskip
\abstract{
We consider a random walk on a second countable locally compact topological space endowed with an invariant Radon measure. We show that if the walk is symmetric and if every subset which is invariant by the walk has zero or infinite measure, then one has escape of mass for almost every starting point. We then apply this result in the context of homogeneous random walks on infinite volume spaces, and deduce a converse to Eskin-Margulis recurrence theorem.}

\large

\bigskip

\section{Introduction}

The starting point of this text is an article published by  Eskin and  Margulis in 2004 which studies the recurrence properties of random walks on homogenenous spaces \cite{EM}. The space in question is a quotient  $G/\Lambda$, where   $G$ is a real Lie group and $\Lambda \subseteq G$ a discrete subgroup. Given a probability measure $\mu$ on $G$, we can define a random walk on $G/\Lambda$ with transitional probability measures   $(\mu\ast \delta_{x})_{x\in G/\Lambda}$. In more concrete terms, a random step starting at a point $x\in G/\Lambda$ is performed by choosing an element $g\in G$ randomly with law $\mu$ and letting it act on $x$. The two authors ask about the position of the walk at time $n$ for large values of $n$. They manage to show a surprising result: if $G$ is a simple real algebraic group, if $\Lambda$ has finite covolume in $G$, and if the support of $\mu$ is compact and generates a Zariski-dense subgroup of $G$, then for every starting point $x\in G/\Lambda$, the $n$-th step distribution of the walk   $\mu^{\ast n}\ast \delta_{x}$ does not escape at infinity. More precisely,  all  the weak-$\ast$ limits of $(\mu^{\ast n}\ast \delta_{x})_{n\geq 0}$ have mass $1$. One says there is no escape of mass. This  reminds of the behavior of the unipotent flow as highlighted by Dani and Margulis in \cite{Dan, Margunip},  proving that the trajectories of a unipotent flow on $G/\Lambda$  spend most of their time inside compact sets. Eskin-Margulis' result is actually the starting point of a fruitful analogy with Ratner theorems, that led to the classification of stationary probability measures on $X$ thanks to the work of Benoist and Quint  \cite{BQI, BQII},  followed by 
 Eskin and  Lindenstrauss  \cite{ELII}.

\bigskip

This paper asks the question of a converse to Eskin-Margulis Theorem:

\bigskip
\emph{Is the absence of mass escape characteristic of random walks on  homogeneous spaces of finite volume, or could it also happen for walks in infinite volume? }

\bigskip

Let us illustrate the question with an \emph{example}. Consider $S$  a hyperbolic surface of the form 
\smallskip
$$\rotatebox{-0}{\includegraphics[scale=0.048]{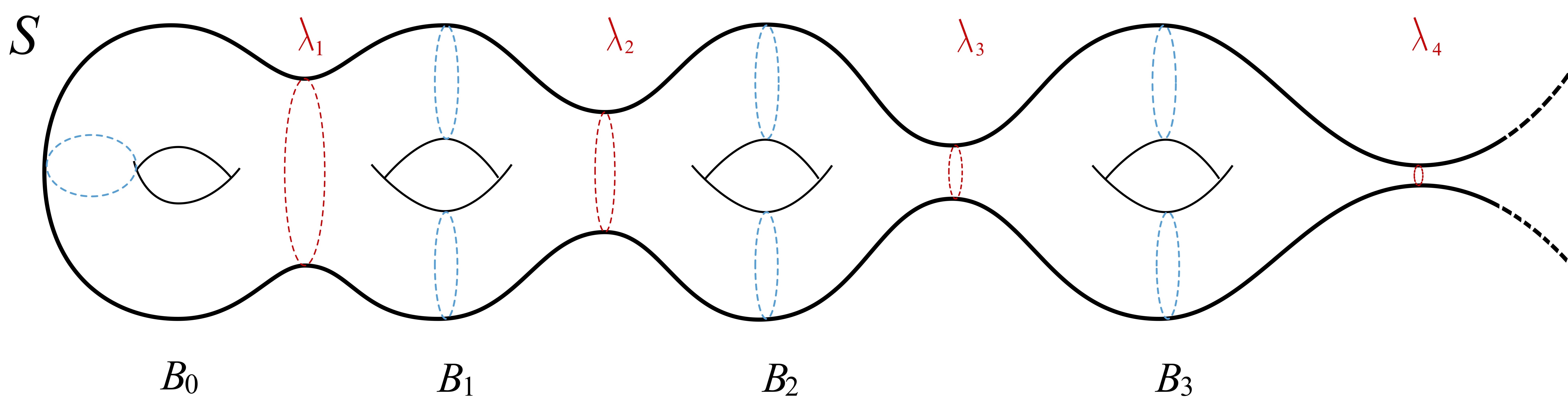}}$$
Such a surface is made of blocks $(B_{i})_{i\geq 0}$ glued together along  geodesic circles (in red) of respective length $(\lambda_{i})_{i\geq 1}\in \R^{\N^*}_{>0}$. Each block comes with a pants decomposition, whose internal boundary components (in blue) are assumed to have length $1$. We consider a (discretized) Brownian motion on (the unit tangent bundle of) $S$ starting from $B_{0}$. If all the $\lambda_{i}$'s are equal, the walk looks like the nearest neighbour random walk on $\N$, so we expect escape of mass. On the other hand, in the degenerate case  where some $\lambda_{i}$ is equal to $0$, the walk evolves in a finite volume space so there is no mass escape by Eskin-Margulis Theorem, or more simply ergodic considerations in this case. Now, we may wonder what happens in intermediate situations where the sequence $(\lambda_{i})_{i\geq 1}$ is positive but allowed to go to zero extremely fast. We will see that  the $n$-th step distribution of the walk  always escapes at infinity, regardless of the choice of  $(\lambda_{i})_{i\geq 1}\in \R^{\N^*}_{>0}$ (\Cref{recip}).

\bigskip

In Section 2, we establish escape of mass in a very general framework, which does not rely on  the algebraic setting mentioned previously.  The measure $\mu$ is assumed to be symmetric, i.e invariant under the inversion map  $g\mapsto g^{-1}$.

\begin{theorem}
 \label{TH1}
Let $X$ be a locally compact second countable topological space equipped with a Radon measure $\lambda$, let $\Gamma$ be a locally compact second countable group acting continuously on $X$ and preserving the measure $\lambda$, let $\mu$ be a probability measure on $\Gamma$ whose support generates $\Gamma$ as a closed group. 

If the probability measure $\mu$ is symmetric and if every measurable $\Gamma$-invariant subset of $X$ has zero or infinite $\lambda$-measure, then for $\lambda$-almost every starting point $x\in X$, one has the weak-$\ast$ convergence: 
$$\mu^{\ast n}\ast \delta_{x} \underset{n \to +\infty}{\longrightarrow} 0$$
\end{theorem}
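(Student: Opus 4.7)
The plan is to analyse the Markov averaging operator $P\colon f\mapsto \int_\Gamma f(g\cdot)\,d\mu(g)$, which is a self-adjoint contraction on $L^2(X,\lambda)$ by the $\lambda$-invariance of the $\Gamma$-action and the symmetry of $\mu$. Since $X$ is second countable, the weak-$\ast$ convergence $\mu^{\ast n}\ast\delta_x\to 0$ is equivalent to $P^n\phi(x)\to 0$ for every $\phi$ in a fixed countable dense family of $C_c(X)$, so it suffices to prove this pointwise decay for $\lambda$-a.e.\ $x$ for a single non-negative compactly supported $\phi$ at a time.

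\textbf{Rigidity of $L^2$-harmonic functions.} I would first show $\ker(I-P)=\{0\}$ in $L^2(\lambda)$. Starting from $Pf=f$ and the Dirichlet identity
\[
\tfrac{1}{2}\int_\Gamma\int_X (f(gx)-f(x))^2\,d\lambda(x)\,d\mu(g)=\|f\|_2^2-\langle Pf,f\rangle = 0,
\]
I deduce that $f\circ g=f$ in $L^2$ for $\mu$-a.e.\ $g$. The stabiliser $H_f:=\{g\in\Gamma:f\circ g=f\}$ is a closed subgroup of $\Gamma$ (by strong continuity of the action on $L^2$) containing $\supp\mu$, hence equal to $\Gamma$ by the generating hypothesis; thus $f$ is $\Gamma$-invariant, its level sets are of $\lambda$-measure $0$ or $\infty$, and Chebyshev with $f\in L^2$ forces $f=0$. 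The same scheme applied to $\mu\ast\mu$ gives $\ker(I+P)=\{0\}$: every element of $\supp\mu$ lies in a single coset modulo the closed group $\Gamma'$ generated by $\supp(\mu\ast\mu)$, and every $a\in\supp\mu$ satisfies $a^2\in\Gamma'$, so $[\Gamma:\Gamma']\le 2$; a one-line check (for $A$ a $\Gamma'$-invariant set and $a_0\in\Gamma\setminus\Gamma'$, $A\cup a_0A$ is $\Gamma$-invariant with $\lambda(A\cup a_0 A)\le 2\lambda(A)$) shows the hypothesis on invariant sets passes to $\Gamma'$. Since $\|P^n\phi\|_2^2=\langle P^{2n}\phi,\phi\rangle$ is non-increasing in $n$, spectral calculus then yields $\|P^n\phi\|_2\to 0$ for every $\phi\in L^2(\lambda)$.

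\textbf{Pointwise upgrade and main obstacle.} The delicate point is passing from $L^2$-decay to $\lambda$-a.e.\ decay. The natural attempt is a limsup harmonicity argument: set $F:=\limsup_n P^n\phi$, which is bounded by $\|\phi\|_\infty$; reverse Fatou inside the probability integral $\int_\Gamma d\mu$ gives $PF\ge F$ pointwise, and if $F\in L^1(\lambda)$ then the $P$-invariance of $\lambda$ forces $PF=F$, so $F$ is a bounded $L^1$ hence $L^2$ harmonic function and the rigidity step yields $F=0$. The obstacle is that in infinite measure the bound $P^n\phi\le\|\phi\|_\infty$ does not supply an $L^1$-dominant function, so establishing $F\in L^1$ is the genuine technical crux of a self-contained argument; a cleaner workaround is to invoke a classical pointwise ergodic theorem for symmetric Markov operators (of Stein--Rota type), which directly provides $\lambda$-a.e.\ convergence of $P^n\phi$ and identifies the limit with the $L^2$-limit $0$. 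Intersecting over a countable dense family of test functions in $C_c(X)$ then yields the weak-$\ast$ convergence $\mu^{\ast n}\ast\delta_x\to 0$ on a set of full $\lambda$-measure.
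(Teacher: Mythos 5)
Your argument is correct in its final form, and it reaches the conclusion by a genuinely different route for one half of the proof while relying on the same deep input for the other half. Both proofs share the skeleton ``(i) the sequence $P_\mu^{2n}\varphi$ converges $\lambda$-a.e.; (ii) the limit must be $0$''. For (i) you invoke a Stein--Rota type pointwise theorem for self-adjoint Markov operators; be aware that the classical statements of Rota and Stein are for probability (or finite measure) spaces, and the version you need on a $\sigma$-finite infinite measure space is precisely the Akcoglu--Sucheston theorem, which the paper does not merely cite but reproves from scratch (\Cref{Rotainfini}) via backwards martingales on the fibred system $(B\times X,\beta\otimes\lambda,T^X)$ and \Cref{martdec}. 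Your honest assessment of the ``limsup harmonicity'' alternative is accurate: without an $L^1$ dominant, $F=\limsup_n P_\mu^n\varphi$ need not be integrable, so that route does have a genuine gap and the appeal to the pointwise theorem is the right fix. For (ii) the two proofs diverge. You identify the limit as $0$ through the $L^2$-norm decay $\|P_\mu^n\varphi\|_2\to 0$, obtained from the spectral theorem once $\ker(I-P_\mu)=\ker(I+P_\mu)=\{0\}$; your Dirichlet-form rigidity argument and the index-$2$ reduction for the group generated by $\supp(\mu\ast\mu)$ are both correct (and the coset argument showing that the zero-or-infinite-measure hypothesis passes to $\Gamma'$ is a nice point that must indeed be checked). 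The paper instead shows that the Ces\`aro averages $\frac1n\sum_{k<n}P_\mu^k\varphi$ tend to $0$ $\lambda$-a.e.\ via the Chacon--Ornstein theorem, then kills the a.e.\ limit of $P_\mu^{2n}\varphi$ by comparison with the averages; the rigidity step ($P_\mu$-harmonic $L^2$ functions are $\Gamma$-invariant, hence zero) is essentially identical to yours, phrased through strict convexity of Hilbert balls rather than the Dirichlet identity. What each buys: your spectral route is cleaner for the symmetric case but needs the extra treatment of the eigenvalue $-1$; the paper's Ces\`aro route costs an appeal to Chacon--Ornstein but makes no use of symmetry at that stage, which is exactly what yields the unconditional Ces\`aro convergence (1) of \Cref{recip} for non-symmetric $\mu$ --- a conclusion your argument does not produce.
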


To put it in a nutshell, a symmetric random walk on a measured space  without finite volume invariant subset is transient in law for almost every starting point.  This result can be seen as an analogue in infinite measure of equidistribution results for random walks in finite measure obtained independently  by Rota \cite{Rota} and Oseledets \cite{Ose}.

In our statement, a measurable subset $A\subseteq X$ is considered as $\Gamma$-invariant if for every $g\in \Gamma$, $\lambda(gA\Delta A)=0$. We will see later an equivalent characterization in terms of the Markov operator of the walk (\Cref{invariance}). 
 
Note also that the condition of symmetry on $\mu$ is necessary: Let $(X, \lambda)$ be a locally compact space with an infinite Radon measure and endowed with a conservative ergodic measure-preserving  $\Z$-action. If $\mu=\delta_{1}$ is the Dirac mass at $1\in \Z$, then for $\lambda$-almost every $x\in X$, the sequence $(n.x)_{n\geq 0}$ comes back close to $x$  infinitely often, so $\mu^{*n}*\delta_{x}=\delta_{n.x}$ cannot weakly converge to $0$. 

Regardless of symmetry assumptions on $\mu$, the proof of \Cref{TH1} still  yields  convergence to $0$ in Cesàro-averages.

\bigskip

\bigskip

In Section 3, we use  \Cref{TH1} to address our original question concerning the  escape of mass of homogeneous  walks on infinite volume spaces. We obtain the following result.

\begin{theorem}  \label{recip}
Let $G$ be a semisimple connected real Lie group with finite center, $\Lambda \subseteq G$ a discrete subgroup of infinite covolume in $G$, and  $\mu$ a probability measure on $G$ whose support generates a group with unbounded projections in the noncompact factors  of $G$.

Then for almost every $x \in G/\Lambda$, one has the weak-$\ast$ convergence:
\begin{align*}
\frac{1}{n} \sum_{k=0}^{n-1}\mu^{\ast k}\ast \delta_{x} \underset{n \to +\infty}{\longrightarrow} 0
\tag{1}\end{align*}

Moreover, if the probability measure $\mu$ is symmetric, then the convergence can be strengthened: 
\begin{align*}
\mu^{\ast n}\ast \delta_{x} \underset{n \to +\infty}{\longrightarrow} 0 \tag{2}
\end{align*}
\end{theorem}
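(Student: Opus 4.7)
I would apply Theorem \ref{TH1} to the space $X = G/\Lambda$ equipped with its $G$-invariant Radon measure $\lambda$, taking $\Gamma \subseteq G$ to be the closed subgroup generated by $\supp \mu$. Local compactness, second countability, continuity, and measure invariance are immediate. Assertion (1) (resp.\ (2)) of \Cref{recip} would then follow from the Ces\`aro (resp.\ symmetric) version of \Cref{TH1}, once one checks the dynamical hypothesis: every measurable $\Gamma$-invariant subset of $G/\Lambda$ has $\lambda$-measure $0$ or $\infty$. I will prove the equivalent statement that any $\Gamma$-invariant vector $f \in L^2(G/\Lambda)$ vanishes.

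Let $G_1, \dots, G_r$ denote the noncompact simple factors of $G$ and write $G_{nc} = G_1 \cdots G_r$. For $S \subseteq \{1, \dots, r\}$, let $L^2_S \subseteq L^2(G/\Lambda)$ be the subspace of vectors fixed by $G_i$ for $i \in S$ and orthogonal to the $G_j$-fixed vectors for $j \notin S$. Each $L^2_S$ is $G$-stable (since the $G_i$ are normal in $G$), and these subspaces give an orthogonal decomposition $L^2(G/\Lambda) = \bigoplus_S L^2_S$. The projections onto the $L^2_S$ commute with $\Gamma$, so the $L^2_S$-component $f_S$ of a $\Gamma$-invariant $f$ is itself $\Gamma$-invariant, and it is enough to show $f_S = 0$ for every $S$.

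For $S \subsetneq \{1, \dots, r\}$, pick $i \notin S$ and set $G_S = \prod_{j \in S} G_j$. The representation of $G$ on $L^2_S$ is trivial on $G_S$ and hence factors through $G/G_S$; on $L^2_S$ there are no invariants under the images of the noncompact simple factors $G_j$, $j \notin S$, of $G/G_S$. The Howe-Moore theorem applied to $G/G_S$ therefore gives decay to $0$ of the matrix coefficients on $L^2_S$ as the group element goes to infinity in $G/G_S$. By hypothesis one can choose $\gamma_n \in \Gamma$ whose projection to $G_i \subseteq G/G_S$ tends to infinity; combined with $\pi(\gamma_n) f_S = f_S$, this forces $\|f_S\|^2 = \langle \pi(\gamma_n) f_S, f_S\rangle \to 0$, so $f_S = 0$.

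For $S = \{1, \dots, r\}$, $f_S$ is $G_{nc}$-invariant. Since $G/G_{nc}$ is compact and $\Lambda$ has infinite covolume in $G$, the subgroup $\Lambda \cap G_{nc}$ has finite index in $\Lambda$ (via the projection to $G/G_{nc}$) and therefore has infinite covolume in $G_{nc}$. By normality of $G_{nc}$, the stabilizer of $x\Lambda \in G/\Lambda$ under the $G_{nc}$-action is $G_{nc} \cap x\Lambda x^{-1} = x(\Lambda \cap G_{nc})x^{-1}$, a conjugate of $\Lambda\cap G_{nc}$ with the same $G_{nc}$-covolume (automorphisms of a semisimple Lie group preserve Haar). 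So every $G_{nc}$-orbit in $G/\Lambda$ has infinite $G_{nc}$-measure, and a $G_{nc}$-invariant $L^2$ function must vanish a.e. The main technical point is the Howe-Moore step via $G/G_S$: one must justify that the restricted representation on $L^2_S$ descends correctly to the quotient with no invariants under the remaining noncompact simple factors, so that Howe-Moore decay applies along any sequence whose $G_i$-component escapes to infinity.
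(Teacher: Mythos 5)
Your overall strategy coincides with the paper's: apply \Cref{TH1} with $\Gamma$ the closed group generated by $\supp\mu$, and verify the ``no invariant set of positive finite measure'' hypothesis by showing $L^2(G/\Lambda)^\Gamma=\{0\}$ via Howe--Moore. Your treatment of the subspaces $L^2_S$ for $S\subsetneq\{1,\dots,r\}$ is a valid repackaging of the paper's \Cref{rig} (the paper runs an induction on factors with the three-term splitting $\mathcal{H}^{G_1}\oplus\mathcal{H}^{G_2}\oplus\mathcal{H}'$; your $2^r$-fold orthogonal decomposition does the same job, and the passage to $G/G_S$ together with a sequence $\gamma_n$ escaping in the $G_i$-direction is sound).

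The genuine gap is in the case $S=\{1,\dots,r\}$, i.e.\ the treatment of the compact factors. First, the claim that $\Lambda\cap G_{nc}$ has finite index in $\Lambda$ is false: the image of $\Lambda$ in the compact group $G/G_{nc}$ is a subgroup of a compact group but need \emph{not} be discrete, hence need not be finite. For instance, take $G=PSL_2(\R)\times SO(3)$, $\Lambda_0\subseteq PSL_2(\R)$ a free discrete group of infinite covolume, $\rho:\Lambda_0\to SO(3)$ injective with dense image, and $\Lambda=\{(\gamma,\rho(\gamma))\}$; then $\Lambda$ is discrete of infinite covolume but $\Lambda\cap G_{nc}$ is trivial, of infinite index. (The conclusion you want, that $\Lambda\cap G_{nc}$ has infinite covolume in $G_{nc}$, is nevertheless true, but needs a different argument, e.g.\ a fundamental-domain computation using compactness of $G/G_{nc}$.) Second, and more seriously, the inference ``every $G_{nc}$-orbit has infinite intrinsic Haar measure, hence a $G_{nc}$-invariant $L^2$ function vanishes'' is not a proof: each individual orbit has $\lambda$-measure zero when compact factors are present, so you must disintegrate $\lambda$ over the orbit space $G_{nc}\backslash G/\Lambda$, and in the example above that orbit space is the quotient of $SO(3)$ by a dense subgroup --- not countably separated, so the naive disintegration is unavailable. (In that example the correct mechanism is ergodicity of the dense subgroup $\rho(\Lambda_0)$ acting on $SO(3)$, which forces the invariant function to be a.e.\ constant, hence zero by infinite total mass --- a different argument from the one you give.) The paper sidesteps all of this by reducing at the outset to a group with no compact factor: it lifts the walk through the finite cover $\pi:G_1\times\dots\times G_s\to G$ and then pushes forward along the projection $p$ onto the product of the noncompact factors, checking that $p(\pi^{-1}(\Lambda))$ is still discrete and of infinite covolume; after this reduction a $G$-invariant $L^2$ function is a.e.\ constant and must vanish. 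You should either perform this reduction, or supply a correct argument for the $G_{nc}$-invariant case along the lines just indicated.
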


Note  that convergence (1) is sufficient to ensure that  Eskin-Margulis' observations  cannot occur when the quotient $G/\Lambda$ has infinite measure. Indeed, for almost every $x\in G/\Lambda$, we obtain the existence of  an extraction
 $\sigma : \N \rightarrow \N$ such that $$ \mu^{\ast \sigma(n)}\ast \delta_{x} \underset{n \to +\infty}{\longrightarrow} 0$$

\Cref{recip} describes the asymptotic behavior of  the probabilities of position  for  \emph{almost  every} starting point $x\in G/\Lambda$. One may not hope for transience in law for every starting point as it is possible that the orbit $\Gamma.x$ is finite.

\bigskip

To conclude this introduction, we emphasize that our paper focuses on the  behavior \emph{in law} of a random walk on  $G/\Lambda$. A related natural theme of study is the behavior of the walk trajectories for which analogous notions of recurrence or transience exist. Although our conclusions support the idea that walks in infinite volume are always transient in law (the mass escapes),  the picture becomes mixed when it comes to considering walk trajectories. Indeed, as observed in \cite{ConGui} or \cite{Benard21-asympt},  pointwise recurrence or transience also depends on the nature of the ambient space.

\section{A general result of transience in law}

This section is dedicated to the proof of \Cref{TH1}. The proof results from a combination of Dunford-Schwartz Theorem \cite{56Dunford-Schwartz} and Akcoglu-Sucheston's pointwise convergence of alternating sequences \cite{Akc-Suc}. The latter guarantees that for $\lambda$-almost every $x\in X$, the sequence of probability measures $(\mu^{\ast n} \ast \widecheck{\mu}^{\ast n}\ast\delta_{x})_{n\geq 0}$ weak-$\ast$  converges toward a finite measure,  and is based on Rota and Oseledets' original idea to express this alternating sequence in terms of reversed martingales \cite{Rota, Ose}. We  give  a shorter proof than the one  in \cite{Akc-Suc}. Although our proof  follows very closely   the one of  Rota \cite{Rota} who considered  walks on finite volume spaces, we use a different formalism that may be useful to illustrate the technique of ``equidistribution of fibres''  contained in the work of Benoist-Quint \cite{BQII} (see also \cite{Buf}).

\subsection{Backwards martingales}

We first present a convergence theorem for backwards martingales on a $\sigma$-finite measured space. It will play a crucial role in the proof  of the convergence of back-and-forths (\ref{Rotainfini}).

\bigskip

First, let us recall the definition of conditional expectation. 

\smallskip

\begin{def.}[Conditional expectation] \label{d} 
Let $(\Omega, \mathcal{F})$ be a measurable space,  $\mathcal{Q}$ a sub-$\sigma$-algebra of $\mathcal{F}$, and $m$ a positive measure on $(\Omega, \mathcal{F})$ whose restriction $m_{|\mathcal{Q}}$ is $\sigma$-finite. Then, for every function $f \in L^1(\Omega, \mathcal{F}, m)$, there exists a unique function $f' \in L^1(\Omega, \mathcal{Q}, m)$ such that for all $\mathcal{Q}$-measurable subset $A \in \mathcal{Q}$, one has $m(f \,1_{A}) = m(f' \,1_{A})$.  We denote this  function by $\mathbb{E}_{m}(f | \mathcal{Q})$.

\end{def.}

\bigskip

We have the following \cite[page 533]{Jer} (see also \cite{Chow}). 
 \smallskip

\begin{th.}[Convergence of backwards martingales] \label{martdec}

Let $(\Omega, \mathcal{F}, m)$ be a measured space, $(\mathcal{Q}_{n})_{n \geq 0}$ a decreasing sequence of sub-$\sigma$-algebras of $\mathcal{F}$ such that for all $n \geq 0$, the restriction $m_{|\mathcal{Q}_{n}}$ is $\sigma$-finite. Then, for any function $f \in L^1(\Omega,\mathcal{F}, m)$, there exists $\psi \in L^1(\Omega,\mathcal{F}, m)$ such  that we have the almost sure convergence : 
\begin{align*}
\mathbb{E}_{m}(f | \mathcal{Q}_{n})\underset{n\to +\infty}{\longrightarrow} \psi \tag{$m$-a.e.}  
\end{align*}
\end{th.}

\noindent{\bf Remark.} 
If the measure $m$ is $\sigma$-finite with respect to the tail-algebra  $\mathcal{Q}_{\infty}:= \bigcap_{n \geq 0} \mathcal{Q}_{n}$, then  \Cref{martdec} can be deduced from the probabilistic 
case (by restriction to $\mathcal{Q}_{\infty}$-measurable domains of finite measure), and we can certify that $\psi= \mathbb{E}_{m}(f|\mathcal{Q}_{\infty})$. On the extreme opposite, if every  $\mathcal{Q}_{\infty}$-measurable subset of  $\Omega$ has  $m$-measure $0$ or $+\infty$, then, the integrability of  $\psi$ implies that $\psi=0$. The general picture is a direct sum of these two contrasting situations as $\Omega= \Omega_{\sigma} \amalg \Omega_{\infty}$ where $\Omega_{\sigma} $ is a countable union of $\mathcal{Q}_{\infty}$-measurable sets of finite measure,  and the restricted measure $m_{|\Omega_{\infty}}$ takes only the values  $0$ or $+\infty$  on $\mathcal{Q}_{\infty}$ (see \cite{Jer}, footnote of page 533).

\subsection{Convergence of back-and-forths}

We now state and show \Cref{Rotainfini} about the convergence of back-and-forths of the $\mu$-random walk on $X$. We denote by $\widecheck{\mu}:= i_{\ast}\mu$ the image of  $\mu$ under the inversion map $i : \Gamma  \rightarrow \Gamma,\, g \mapsto g^{-1}$.

\bigskip

\begin{th.}[Convergence of  back-and-forths  \cite{Akc-Suc}]\label{Rotainfini}
Let $X$ be a locally compact second countable topological space equipped with a Radon measure $\lambda$, let $\Gamma$ be a locally compact second countable group acting continuously on $X$ and preserving the measure $\lambda$,  and let $\mu$ be a probability measure on $\Gamma$.

There exists a family $(\nu_{x})_{x \in X}$ of  finite measures  on $X$ such that for  $\lambda$-almost every $x \in X$, one has the weak-$\ast$ convergence:
$$(\mu^{\ast n}\ast \widecheck{\mu}^{\ast n}) \ast \delta_{x} \underset{n \to +\infty}{\longrightarrow} \nu_{x}$$
\end{th.}

\bigskip

\begin{proof}

The following proof is inspired by \cite{Rota} and  \cite{BQII}. Denote 
$$B:=\Gamma^{\N^\ast}, \,\,\,\,\,\,\beta := \mu^{\N^\ast}, \,\,\,\,\,\,T: B \rightarrow B, (b_{i})_{i \geq 1} \mapsto (b_{i+1})_{i \geq 1}$$
the one-sided shift. One introduces a $\sigma$-finite fibred  dynamical system  $(B^X, \beta^X, T^X)$ setting
   
\begin{itemize}
\item $B^X:= B\times X$  
\item $\beta^X := \beta \otimes \lambda \in \mathcal{M}^{Rad}(B \times X)$
\item $T^X: B^X \rightarrow B^X, (b,x)\mapsto (Tb, b^{-1}_{1}x)$. 
\end{itemize}

Let $\mathcal{B}$ and $\mathcal{X}$ denote the Borel $\sigma$-algebras of $B$ and $X$. The Borel $\sigma$-algebra of $B^X$ is then the product algebra $\mathcal{B}\otimes \mathcal{X}$. For all $n\geq 0$,  define the \emph{sub-$\sigma$-algebra of the $n$-fibres} of $T^X$ by setting $$\mathcal{Q}_{n}:= (T^X)^{-n}(\mathcal{B}\otimes \mathcal{X})$$
 It is a sub-$\sigma$-algebra of $\mathcal{B}\otimes \mathcal{X}$ such that for all $c\in B^X$, the smallest $\mathcal{Q}_{n}$-measurable subset of $B^X$ containing $c$ is the $n$-fibre $(T^X)^{-n}(T^X)^{n}(c)$. The restriction $\beta^X_{|\mathcal{Q}_{n}}$ is a $\sigma$-finite measure because $\beta^X$ is $\sigma$-finite with respect to the $\sigma$-algebra $\mathcal{B}\otimes \mathcal{X}$ and is preserved by $T^X$. 

\smallskip
As a first step, we will fix a continuous function with compact support  $f \in C^0_{c}(X)$ and  show that   the sequence $\left((\mu^{\ast n}\ast \widecheck{\mu}^{\ast n}\ast \delta_{x})(f)\right)_{n \geq 0}$ converges in $\R$ for $\lambda$-almost every $x$. To this end, we express $(\mu^{\ast n}\ast \widecheck{\mu}^{\ast n}\ast \delta_{x})(f)$ using a conditional expectation and we apply  \Cref{martdec}. Denote  
$$\widetilde{f} : B^X \rightarrow \R, (b, x) \mapsto f(x),\,\,\,\,\,  \varphi_{n}:= \mathbb{E}_{\beta^X}(\widetilde{f} | \mathcal{Q}_{n}) \in L^1(B^X, \mathcal{Q}_{n})$$
 
We first give an explicit formula for the function $\varphi_{n}$. Intuitively, given a point $c=(b,x)\in B^X$, the value $\varphi_{n}(c)$ stands for the mean value  of $\widetilde{f}$ on the smallest  $\mathcal{Q}_{n}$-measurable subset of $B^X$ containing $c$.  By definition, this subset is the $n$-fibre going through $c$ and is identified with the product  $\Gamma^n$ under   the bijection
 $$h_{n,c} : \Gamma^n \rightarrow (T^X)^{-n}(T^X)^{n}(c), \,\,a=(a_{1}, \dots, a_{n}) \rightarrow (aT^nb, \, a_{1}\dots a_{n}b^{-1}_{n}\dots b^{-1}_{1}.x )$$
The following lemma asserts that $\varphi_{n}(c)$ is nothing else than the mean value of  $\widetilde{f}$ on $(T^X)^{-n}(T^X)^{n}(c)\equiv \Gamma^{n}$ with respect to the measure $\mu^{\otimes n}$. 

\bigskip

\begin{lemme} \label{espcond}
Let $n\geq 0$. For $\beta^X$-almost every $(b,x) \in B^X$, one has 
$$\varphi_{n}(b,x)=\int_{\Gamma^n}f(a_{1}\dots a_{n}b^{-1}_{n}\dots b^{-1}_{1}x) \,\,d\mu^{\otimes n}(a)$$
\end{lemme}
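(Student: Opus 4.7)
The plan is to use the characterization of conditional expectation by its two defining properties: $\mathcal{Q}_n$-measurability and the integral identity against $\mathcal{Q}_n$-measurable indicators (as in \Cref{d}). I denote by $\varphi_n^{\star}(b,x)$ the right-hand side of the claimed formula and aim to show $\varphi_n^{\star} = \varphi_n$ $\beta^X$-almost everywhere. First note that $\widetilde{f} \in L^1(\beta^X)$ since $f \in C^0_c(X) \subseteq L^1(\lambda)$ and $\beta$ is a probability measure, so the conditional expectation is well defined and unique.

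For the measurability step, I would compute $(T^X)^n(b,x)$ explicitly by induction: since $T^X(b,x) = (Tb, b_1^{-1}x)$, one gets $(T^X)^n(b,x) = (T^n b,\, b_n^{-1}\cdots b_1^{-1} x)$. Hence $\varphi_n^{\star}(b,x) = F\bigl((T^X)^n(b,x)\bigr)$ with $F(b',y) := \int_{\Gamma^n} f(a_1\cdots a_n y)\, d\mu^{\otimes n}(a)$, so $\varphi_n^{\star}$ factors through $(T^X)^n$ and is therefore $\mathcal{Q}_n$-measurable.

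For the integral identity, I would fix $C \in \mathcal{B}\otimes \mathcal{X}$ and verify that both
$\int_{(T^X)^{-n}C} \widetilde{f}\, d\beta^X$ and $\int_{(T^X)^{-n}C} \varphi_n^{\star}\, d\beta^X$
reduce, via Fubini, to the same triple integral. The key move is the change of variables $y = b_n^{-1}\cdots b_1^{-1} x$ under the $\Gamma$-invariant measure $\lambda$, after splitting $\beta = \mu^{\otimes n} \otimes \beta$ by writing $b = (b_1,\ldots,b_n,b')$. On the LHS this turns $\int 1_C(b', b_n^{-1}\cdots b_1^{-1}x)\, f(x)\, d\lambda(x)\, d\mu^{\otimes n}(b_1,\ldots,b_n)\, d\beta(b')$ into
$$\int 1_C(b',y)\, f(b_1\cdots b_n y)\, d\lambda(y)\, d\mu^{\otimes n}(b_1,\ldots,b_n)\, d\beta(b').$$
On the RHS the same substitution makes the inner factor $1_C$ independent of $(b_1,\ldots,b_n)$, so integrating them out costs nothing, and after renaming the dummy variables $a_i \leftrightarrow b_i$ in $\mu^{\otimes n}$ we obtain the exact same expression. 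Uniqueness of conditional expectation then yields $\varphi_n^{\star} = \varphi_n$ $\beta^X$-a.e., which is the claim.

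The main point requiring care is organizing the Fubini/change-of-variables bookkeeping: one must disentangle the three roles played by the first $n$ coordinates of $b$ — as a coordinate on which $1_C \circ (T^X)^n$ depends, as the substitution defining the new variable $y$, and, on the RHS, as a dummy of the average which is \emph{decoupled} from the similar dummy $a$. Once these are separated, the identity drops out from $\Gamma$-invariance of $\lambda$ and from $\mu^{\otimes n}$ being a probability measure, without any analytic subtlety.
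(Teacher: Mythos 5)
Your proposal is correct and takes essentially the same route as the paper: you verify the two defining axioms of conditional expectation, obtaining $\mathcal{Q}_n$-measurability from the fact that the candidate factors through $(T^X)^n$, and the integral identity from Fubini, the substitution $y=b_n^{-1}\cdots b_1^{-1}x$ justified by $\Gamma$-invariance of $\lambda$, and the identification of the dummy $a$ with the first $n$ coordinates of $b$. The only cosmetic difference is that the paper first reduces to $f\geq 0$ before the Fubini manipulations, whereas you appeal to $\widetilde{f}\in L^1(\beta^X)$; both are adequate.
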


\bigskip

\begin{proof}[Proof of \Cref{espcond}]
This result is extracted from  \cite{BQII} (Lemma 3.3). We recall the proof. Up to considering separately the positive and negative parts of $f$, one may assume $f\geq 0$. Denote by $\varphi'_{n} : B^X\rightarrow [0,+\infty]$ the map defined by the right-hand side of the above equation. We show it coincides almost everywhere with $\varphi_{n}$ by proving it also satisfies the axioms for the conditional expectation characterizing $\varphi_{n}$.

As the value $\varphi'_{n} $ at a point $c\in B^X$ only depends on $(T^X)^n(c)$, the map  $\varphi'_{n} $ is $\mathcal{Q}_{n}$-measurable. It remains to show that for every $A\in \mathcal{Q}_{n}$, one has the equality $\beta^X(1_{A}\widetilde{f}) =\beta^X(1_{A}\varphi'_{n})$. Writing $A$ as $A=(T^X)^{-n}(E)$ where $E \in \mathcal{B}\otimes \mathcal{X}$ and remembering that  the measure $\lambda$ is preserved by $\Gamma$, one computes that:

\begin{align*}
\beta^X(1_{A}\varphi'_{n})&=\int_{B\times X\times \Gamma^n}1_{A}(b,x)f(a_{1}\dots a_{n}b^{-1}_{n}\dots b^{-1}_{1}x) \,\,d\mu^{\otimes n}(a)d\beta(b)d\lambda(x) \\
&=\int_{B\times X\times \Gamma^n}1_{E}(T^nb, b^{-1}_{n}\dots b^{-1}_{1}x)f(a_{1}\dots a_{n}b^{-1}_{n}\dots b^{-1}_{1}x) \,\,d\mu^{\otimes n}(a)d\beta(b)d\lambda(x) \\
&=\int_{B\times X\times \Gamma^n}1_{E}(T^nb, x)f(a_{1}\dots a_{n}x) \,\,d\mu^{\otimes n}(a)d\beta(b)d\lambda(x) \\
&=\int_{B\times X}1_{E}(T^nb, x)f(b_{1}\dots b_{n}x) \,d\beta(b)d\lambda(x) \\
&=\int_{B\times X}1_{E}(T^nb, b^{-1}_{n}\dots b^{-1}_{1}x)f(x) \,d\beta(b)d\lambda(x) \\
&=\beta^X(1_{A}\widetilde{f})
\end{align*}
which concludes the proof of \Cref{espcond}.
\end{proof}
\bigskip

\Cref{espcond} implies that for $\lambda$-almost every $x \in X$, 
\begin{align*}
\int_{B}\varphi_{n}(b,x)\,d\beta(b) = (\mu^{\ast n}\ast \widecheck{\mu}^{\ast n}\ast\delta_{x})(f) \tag{$\ast \ast$}
\end{align*}

But  \Cref{martdec} on convergence of backwards martingales  asserts the sequence of conditional expectations $(\varphi_{n})_{n \geq 0}$ converges $\beta^X$-almost-surely. Noticing that  $||\varphi_{n}||_{\infty} \leq ||f||_{\infty}$, the dominated convergence theorem and  equation  $(\ast \ast)$ imply that for $\lambda$-almost every $x\in X$, the sequence $$((\mu^{\ast n}\ast \widecheck{\mu}^{\ast n}\ast\delta_{x})(f))_{n \geq 0}$$ has a limit in $\R$. 

\bigskip
We deduce from the previous paragraph that for $\lambda$-almost every $x\in X$, the sequence of probability measures   $(\mu^{\ast n}\ast \widecheck{\mu}^{\ast n} \ast \delta_{x})_{n \geq 0}$ has a weak-$\ast$ limit (which is a measure on $X$ whose mass is less or equal to one, and possibly null). It is indeed a standard argument,  that uses the separability of the space of continuous functions with compact support on $X$ equipped with  the supremum norm $(C^0_{c}(X),||.||_{\infty})$, and the representation of non negative linear forms on $C^0_{c}(X)$ by Radon measures (Riesz Theorem). This concludes the proof of \Cref{Rotainfini}.

\end{proof}

\subsection{Proof of \Cref{TH1}} \label{proofA}

We now prove \Cref{TH1}, stating that a symmetric random walk on a measured space without finite volume invariant subset is almost everywhere transient in law. The proof will use the \emph{Markov operator} $P_{\mu}$ attached to $\mu$. It acts  on the set of non-negative measurable functions on $X$ via the formula  $$P_{\mu}\varphi (x):=\int_{G}\varphi(gx)\,d\mu(g)$$
and can be extended as a contraction on the spaces $L^p(X,\lambda)$ for  $p\in [1, \infty ]$.

Recall from the introduction that a measurable subset $A \subseteq X$ is \emph{$\Gamma$-invariant} if for all $g \in \Gamma$, one has $\lambda(A \Delta gA)=0$. This condition can be rephrased in terms of the Markov operator: 

\begin{lemme}\label{invariance}
A measurable subset $A \subseteq X$ is \emph{$\Gamma$-invariant} if and only if 
\begin{align*}
P_{\mu}1_{A}=1_{A}\tag{$\lambda$-a.e.}
\end{align*}
\end{lemme}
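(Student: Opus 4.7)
The forward direction is Fubini: if $A$ is $\Gamma$-invariant then for every $g \in \Gamma$, $\lambda(g^{-1}A \Delta A) = 0$, so the Borel function $(g,x) \mapsto |1_A(gx) - 1_A(x)|$ vanishes $\mu \otimes \lambda$-almost everywhere. Reversing the order of integration gives $1_A(gx) = 1_A(x)$ for $\mu$-a.e.\ $g$ and $\lambda$-a.e.\ $x$; integrating against $\mu$ yields $P_\mu 1_A = 1_A$ $\lambda$-a.e.

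For the converse, assume $P_\mu 1_A = 1_A$ $\lambda$-a.e. Since $P_\mu 1 = 1$ we also have $P_\mu 1_{A^c} = 1_{A^c}$ $\lambda$-a.e. Pairing the first identity with $1_{A^c}$ and the second with $1_A$ and applying Fubini gives
\[\int_\Gamma \lambda(g^{-1}A \cap A^c)\,d\mu(g) = \int_\Gamma \lambda(g^{-1}A^c \cap A)\,d\mu(g) = 0,\]
so for $\mu$-a.e.\ $g$ one has $\lambda(g^{-1}A \Delta A) = 0$, equivalently (using $\Gamma$-invariance of $\lambda$) $\lambda(gA \Delta A) = 0$. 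Set $H := \{g \in \Gamma : \lambda(gA \Delta A) = 0\}$: it carries full $\mu$-measure, and is a subgroup of $\Gamma$, closure under inversion coming from $\Gamma$-invariance of $\lambda$ and closure under products from the triangle inequality for symmetric differences.

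The crux is to show $H$ is closed in $\Gamma$. For $f \in C^0_c(X)$ introduce
\[\Phi_f : \Gamma \to \R, \qquad g \longmapsto \int_X f(gz)\,1_A(z)\,d\lambda(z) = \int_X f \cdot 1_{gA}\,d\lambda,\]
the second equality coming from $\Gamma$-invariance of $\lambda$. This function is continuous: for $g_n \to g$ the sets $g_n^{-1}\mathrm{supp}(f)$ eventually lie in a common compact subset of $X$, so $|f(g_n z)\,1_A(z)|$ is dominated by a $\lambda$-integrable function and dominated convergence applies. For $g \in H$, $\Phi_f(g) = \int f \cdot 1_A\,d\lambda$ is independent of $g$, and by continuity the same holds on $\bar H$. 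Uniqueness in the Riesz representation theorem then forces the Radon measures $1_{gA}\lambda$ and $1_A\lambda$ to coincide for every $g \in \bar H$, so $g \in H$ and $H$ is closed. Since $\mu(H) = 1$ and $\mathrm{supp}(\mu)$ generates $\Gamma$ as a closed group (hypothesis of \Cref{TH1}), the inclusion $H \supseteq \mathrm{supp}(\mu)$ forces $H = \Gamma$, i.e.\ $A$ is $\Gamma$-invariant.

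The main obstacle is precisely the closedness of $H$ when $\lambda(A) = \infty$: in that case $1_A \notin L^p(X,\lambda)$ for any finite $p$, so the strong continuity of the Koopman representation does not apply to it directly. Pairing against compactly supported continuous functions and invoking Riesz representation bypasses this issue.
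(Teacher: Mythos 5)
Your proof is correct and follows essentially the same route as the paper: deduce from $P_\mu 1_A = 1_A$ that $\lambda(gA\,\Delta\,A)=0$ for $\mu$-a.e.\ $g$, then extend to all of $\Gamma$ by pairing $1_{gA}\lambda$ against $C^0_c(X)$ and using dominated convergence to pass to limits in the group. The only cosmetic differences are that you obtain the full-measure set of invariating $g$'s via the complementary pairing $\lambda(g^{-1}A\cap A^c)=\lambda(g^{-1}A^c\cap A)=0$ rather than via the extremality of the values $\{0,1\}$ of $1_A$, and that you phrase the limit argument as closedness of the subgroup $H$; both are equivalent to the paper's argument.
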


\begin{proof}The point is to show that $P_{\mu}$-invariance implies $\Gamma$-invariance. Let $A$ be a measurable subset  such that $P_{\mu}1_{A}=1_{A}$ $\lambda$-a.e.  The assumption on $A$ means that  for $\lambda$-almost every $x\in X$, $\mu$-almost every $g\in G$, one has $1_{A}(gx)=1_{A}(x)$. Fubini Theorem then implies  that  for  $\mu$-almost every $g\in \Gamma$, one has $\lambda(A\Delta gA)=0$. The subgroup  $D\subseteq \Gamma$ generated by such elements $g$ is dense in $\Gamma$ and leaves the set $A$ $\lambda$-a.e.-invariant. So we just need to check that the $\lambda$-a.e.-invariance  is preserved by taking limits. Let $g\in \Gamma$, $(g_{n})\in D^\N$ such that $g_{n}\rightarrow g$, let $\varphi\in C^0_{c}(X)$. By dominated convergence,
$$\int_{g_{n}A} \varphi \,d\lambda\, -\, \int_{gA}\varphi \,d\lambda=\int_{A} \varphi(g_{n}.)-\varphi(g.) \, d\lambda  \underset{n \to +\infty}{\longrightarrow} 0  $$
 We deduce that $\int_{A}\varphi \,d\lambda=\int_{gA}\varphi \,d\lambda$. As this is true for every $\varphi\in C^0_{c}(X)$, one concludes that $\lambda(A\Delta gA)=0$. 
 \end{proof}

\bigskip

We can now conclude  the 

\begin{proof}[Proof of \Cref{TH1}]
It is enough  to show that for  $\lambda$-almost every $x \in X$, one has the convergence  $\mu^{\ast 2n}\ast \delta_{x} \rightarrow 0$. According to   \Cref{Rotainfini} and the symmetry of $\mu$, the sequence $(\mu^{\ast 2n}\ast \delta_{x} )_{n \geq 0}$ converges to a finite measure, so it is enough to check the following convergence in average:  for $\lambda$-almost every $x\in X$, 
$$\frac{1}{n}\sum_{k=0}^{n-1}\mu^{\ast n}\ast \delta_{x}  \longrightarrow 0$$

As announced in the preceding remark, we show this last convergence  \emph{without using the assumption of symmetry on $\mu$}. We need to check that for every non-negative  continuous function with compact support  $\varphi \in C^0_{c}(X)^+$, 
\begin{align*} 
\frac{1}{n}\sum_{k=0}^{n-1}P^k_{\mu}\varphi \longrightarrow 0  \tag{ $\lambda$-a.e.}
\end{align*}
 where $P_{\mu}$ denotes the Markov operator of  the walk.

 Dunford-Schwartz Ergodic Theorem \cite{56Dunford-Schwartz, Mey} implies that the sequence of functions $(\frac{1}{n}\sum_{k=0}^{n-1}P^k_{\mu}\varphi)_{n\geq 1}$ converges almost-surely to some function $\psi : X \rightarrow \R_{+}$. As the functions $P^k_{\mu}\varphi$ are uniformly bounded in  $L^2(X, \lambda)$, Fatou lemma implies that $\psi\in L^2(X, \lambda)$.  Furthermore, the function $\varphi$ being bounded, the dominated convergence theorem applied to  the probability space $(\Gamma,\mu)$ gives the $P_{\mu}$-invariance 
\begin{align*}
P_{\mu}\psi =\psi \tag{$\lambda$-a.e.}
\end{align*}

\smallskip
We now infer that $\psi$ is $\Gamma$-invariant, meaning that for $g\in \Gamma$, one has 
the equality $\psi \circ g =\psi$ $\lambda$-a.e. on $X$. To this end, observe that the $P_{\mu}$-invariance of $\psi$ expresses $\psi$ as a barycenter of translates $\psi\circ g$: 
\begin{align*}
\int_{\Gamma}\psi \circ g\, d\mu(g)= \psi \,\,\,\tag{$\lambda$-a.e.}
\end{align*}
 But the functions $\psi \circ g$ all are in $L^2(X,\lambda)$ and have the same norm as $\psi$.  The strict convexity of balls in a Hilbert space then gives  for $\mu$-almost every $g\in \Gamma$,  the equality $\psi \circ g=\psi $ $\lambda$-almost everywhere. As the support of $\mu$ generates $\Gamma$ as a closed subgroup, we infer as in \Cref{invariance} that for all  $g\in \Gamma$, one has $\psi \circ g =\psi$ $\lambda$-a.e., which is the $\Gamma$-invariance announced above.

The $\Gamma$-invariance of $\psi$ implies  that for every constant $c>0$, the set $\{\psi > c\}$ is $\Gamma$-invariant, so has zero or infinite $\lambda$-measure by hypothesis. As $\psi^2$ is integrable, we must have  $\lambda\{\psi >c\}=0$. Finally, we get that $\psi=0$ $\lambda$-almost everywhere, which finishes the proof.

\end{proof}

\section{Application to homogeneous walks on infinite volume spaces}

This section is dedicated to the proof of \Cref{recip}. We let  $G$ be a semisimple connected real Lie group with finite center, $\Lambda \subseteq G$ a discrete subgroup of  infinite covolume in $G$, and $\Gamma\subseteq G$ a closed subgroup.
\bigskip

Let us recall the notion of factors of $G$ used in the section. 

\begin{def.}Denote by $\mathfrak{g}$ the Lie algebra of $G$. It can be uniquely decomposed as a direct sum of simple ideals: $\mathfrak{g} = \mathfrak{g}_{1} \oplus \dots \oplus \mathfrak{g}_{s}$. The \emph{factors} of $G$ are the immersed connected subgroups $G_{1},\dots,G_{s}$ of $G$ whose Lie algebras are $\mathfrak{g}_{1},\dots,\mathfrak{g}_{s}$. They are closed in $G$ and commute mutually: for $i\neq j \in \{1,\dots,s\}$ and $g_{i}\in G_{i}, g_{j}\in G_{j}$ one has $g_{i}g_{j}=g_{j}g_{i}$. Lastly,  the product map $\pi : G_{1} \times \dots \times G_{s} \rightarrow G, (g_{1},\dots,g_{s}) \mapsto g_{1}\dots g_{s}$ is a morphism of groups which is onto and has finite kernel. 
\end{def.}

We  make the assumption that \emph{$\Gamma$ has unbounded projections in the noncompact factors of $G$}, which means that the projection  of $\pi^{-1}(\Gamma)\subseteq G_{1} \times \dots \times G_{s} $ in any $G_{i}$ is unbounded if $G_{i}$ is noncompact.  

\bigskip

 \Cref{recip} expresses  escape of mass for a walk on $G/\Lambda$ induced by a probability measure $\mu$ whose supports generates a dense subgroup of $\Gamma$. We will obtain it as a consequence of \Cref{TH1} together with its comment about the non symmetric case. To apply them, we need to check the assumption that every subset of $G/\Lambda$ which is invariant by the walk has zero or infinite Haar measure. This would be obvious if the action of $\Gamma$ on $G/\Lambda$ were ergodic. However, this is not always the case, even when $\Gamma$, $\Lambda$ are Zariski-dense in $G$.

 \bigskip
 \noindent{\bf Example}.  Denote by $\D$ the Poincaré disk, set $G=PSL_{2}(\R)\equiv \text{Isom}^+(\D)\equiv T^1\D$, and consider a  Schottky subgroup $S_{0}\subseteq G$ whose limit set $\mathscr{L}_{0}$ on the boundary of  $\D$ is contained under four geodesic arcs, which are disjoint and small enough. Set $\Gamma=\Lambda=S_{0}$.  For some non-zero measure subset of unit vectors $x\in T^1\D$, the set $\widebar{x\Lambda}\cap \partial \D=x\mathscr{L}_{0}$ does not intersect the limit set $\mathscr{L}_{0}$ of $\Gamma$. Given such an $x$ and looking in the quotient space, the orbital map $\Lambda \rightarrow \Gamma\backslash G, g\mapsto \Gamma x g $ is proper, so its image cannot be dense.  Thus, the right action of $\Lambda$ on $\Gamma\backslash G$ is not ergodic, or equivalently, the left action of  $\Gamma$ on $G/\Lambda$ is not ergodic.

 \bigskip
 
The absence of finite volume invariant subspaces will be a consequence of Howe-Moore Theorem \cite[Theorem 2.2.20]{Zim} which we now recall.

\begin{th.*}[Howe-Moore]

Let $G$ be a semisimple connected real Lie group with finite center, and $\pi$ a continuous morphism from $G$ to the unitary group of a separable Hilbert space $(\mathcal{H}, \langle.,. \rangle)$.  Assume that every noncompact factor  $G_{i}$ of $G$ has a trivial set of fixed points,
 i.e. $\mathcal{H}^{G_{i}}:=\{x \in \mathcal{H}, \,G_{i}.x=x\}$ is $\{0\}$. 

Then for every $v,w \in \mathcal{H}$, one has 
$$\langle \pi(g).v,w \rangle \underset{g \to \infty}{\longrightarrow} 0$$  

\end{th.*}

In the  statement, the unitary group $U(\mathcal{H})$ is endowed with the strong operator topology, and the notation $g\to \infty$ means that $g$ leaves every compact subset of $G$. 
 \bigskip

Howe-Moore Theorem implies a lemma of rigidity. 
\smallskip

\begin{lemme}\label{rig}
Assume that $G$ has no compact factor. Let $(\mathcal{H}, \rho)$ be a unitary representation of $G$ on a separable Hilbert space. 
$$\text{If  $\,\,\mathcal{H}^G = \{0\}\,\,$ then $\,\,\mathcal{H}^\Gamma = \{0\}$}$$
\end{lemme}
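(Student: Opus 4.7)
The plan is to induct on the number $s$ of simple factors of $G$, which by hypothesis are all noncompact.

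For the base case $s=1$, the group $G=G_{1}$ is simple and noncompact, so Howe-Moore Theorem applies directly: since $\mathcal{H}^{G}=\{0\}$, one has $\langle \rho(g)v,w\rangle \to 0$ as $g\to \infty$ in $G$ for every $v,w\in \mathcal{H}$. The subgroup $\Gamma$ is itself unbounded in $G$, so for any $v\in \mathcal{H}^{\Gamma}$ and any sequence $\gamma_{n}\to \infty$ in $\Gamma$, one has $\|v\|^{2}=\langle \rho(\gamma_{n})v,v\rangle \to 0$, forcing $v=0$.

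For the inductive step $s\geq 2$, I would fix $v\in \mathcal{H}^{\Gamma}$ and for each $i\in \{1,\dots,s\}$ decompose $v$ orthogonally as $v=v_{i}+w_{i}$ with $v_{i}\in \mathcal{H}^{G_{i}}$ and $w_{i}\in (\mathcal{H}^{G_{i}})^{\perp}$. Both summands are $\Gamma$-fixed, since $\mathcal{H}^{G_{i}}$ is $G$-invariant (the factors of $G$ commute). On $\mathcal{H}^{G_{i}}$ the representation of $G$ factors through the quotient $G/G_{i}$, which is again a connected semisimple Lie group with finite center, no compact factor, and exactly $s-1$ simple factors; the image of $\Gamma$ in $G/G_{i}$ still has unbounded projection in each of these factors, and the space of $G/G_{i}$-fixed vectors in $\mathcal{H}^{G_{i}}$ coincides with $\mathcal{H}^{G}=\{0\}$. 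The induction hypothesis then forces $v_{i}=0$, so $v\in (\mathcal{H}^{G_{i}})^{\perp}$ for every $i$, i.e.\ $v\in \mathcal{V}^{\perp}$ where $\mathcal{V}:=\overline{\sum_{i}\mathcal{H}^{G_{i}}}$.

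To finish, I would apply Howe-Moore once more to the $G$-subrepresentation $\mathcal{V}^{\perp}$. Any $G_{i}$-fixed vector in $\mathcal{V}^{\perp}$ lies in $\mathcal{H}^{G_{i}}\subseteq \mathcal{V}$, hence vanishes, so no noncompact factor has a nonzero fixed vector in $\mathcal{V}^{\perp}$. The theorem then gives decay of matrix coefficients as $g\to \infty$ in $G$, and since $\Gamma$ is unbounded in $G$ (no compact factor to hide in), a diagonal matrix coefficient along $\gamma_{n}\to \infty$ in $\Gamma$ yields $v=0$. The step I expect to be the main obstacle is the bookkeeping around the morphism $\pi : G_{1}\times \cdots \times G_{s}\to G$ with finite kernel when checking that $G/G_{i}$ really is a connected semisimple Lie group with finite center and that the image of $\Gamma$ still satisfies the unbounded-projection hypothesis; these verifications are routine but must be set up carefully for the induction to run.
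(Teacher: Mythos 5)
Your proof is correct and follows essentially the same route as the paper: an induction on the number of simple factors combined with the orthogonal decomposition of $\mathcal{H}$ into the factor-fixed subspaces $\mathcal{H}^{G_{i}}$ and their complement, with Howe--Moore applied to the complement where no factor has nonzero fixed vectors. The only cosmetic difference is that the paper first pulls the representation back to the product $G_{1}\times\cdots\times G_{s}$ via $\pi$ and splits off one factor at a time, whereas you pass to the quotients $G/G_{i}$; the finite-kernel bookkeeping you flag is indeed routine and works out.
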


\begin{proof}[Proof of  \Cref{rig}]
Denote by $G_{1}, \dots, G_{s}$ the factors of $G$. Up to pulling back the representation of $G$ by the product map $\pi : G_{1}\times \dots \times G_{s}\rightarrow G, (g_{1}, \dots, g_{s})\mapsto g_{1}\dots g_{s}$,  one may suppose that $G= G_{1}\times \dots \times G_{s}$. 

Assume $s=2$. The hypothesis $\mathcal{H}^G = \{0\}$ implies that $\mathcal{H}^{G_{1}} \cap \mathcal{H}^{G_{2}} =\{0\}$. Thus, we can decompose $$\mathcal{H} = \mathcal{H}^{G_{1}} \oplus \mathcal{H}^{G_{2}} \oplus \mathcal{H}'$$
where $\mathcal{H}'$ is the orthogonal of $\mathcal{H}^{G_{1}} \oplus \mathcal{H}^{G_{2}}$ in $\mathcal{H}$. Moreover, each subspace is invariant by $G$. Let $v \in \mathcal{H}$ be a   $\Gamma$-invariant vector. Decompose $v$ as $v= v_{1} + v_{2} + v'$ with $v_{i} \in \mathcal{H}^{G_{i}}$, $v' \in \mathcal{H}'$. The representation of $G$ on $\mathcal{H}$ leads to a unitary representation of $G_{2}$ on $\mathcal{H}^{G_{1}}$ and the  $\Gamma$ invariance of $v$ implies that $v_{1}$ is invariant under $p_{2}(\Gamma)$, the projection of $\Gamma$ on the factor $G_{2}$. As $p_{2}(\Gamma)$ is unbounded in $G_{2}$, one can apply Howe-Moore Theorem to obtain $v_{1}=0$. In the same way $v_{2}=0$. Thus $v = v' \in \mathcal{H}'$. The representations of  $G_{1}$ and $G_{2}$ induced by  $G$ on $\mathcal{H}'$ have no non-trivial fix point. Hence, we can apply Howe-Moore Theorem one more time to infer that $v'=0$. Finally, $\mathcal{H}^\Gamma =\{0\}$. 

For the general case where $s \geq 1$, argue by induction on $s$ using the previous method and the decomposition of $\mathcal{H}$ as $ \mathcal{H}^{G_{1} \times \dots\times G_{s-1}} \oplus \mathcal{H}^{G_{s}} \oplus \mathcal{H}'$.
\end{proof}

\bigskip

We deduce that for a group $G$ with no compact factor, the action of $\Gamma$ on  $G/\Lambda$ does not have finite volume invariant subspaces. 

\smallskip

\begin{lemme}\label{quasi-erg}
Assume that $G$ has no compact factor. Then every $\Gamma$-invariant subset of $G/\Lambda$ has zero or infinite Haar-measure.
\end{lemme}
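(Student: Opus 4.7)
The plan is to argue by contradiction via the unitary representation of $\Gamma$ on $L^2(G/\Lambda,\lambda)$, where $\lambda$ is the $G$-invariant Haar measure on $G/\Lambda$. Suppose there exists a $\Gamma$-invariant measurable subset $A\subseteq G/\Lambda$ with $0<\lambda(A)<\infty$; I will deduce a contradiction using \Cref{rig}.

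First I set up the representation. The group $G$ acts unitarily on $\mathcal{H}:=L^2(G/\Lambda,\lambda)$ by $(\rho(g)f)(x):=f(g^{-1}x)$; this is the left regular (quasi-regular) representation, and it is continuous because $G$ is second countable and $\lambda$ is $G$-invariant. The key claim is that $\mathcal{H}^G=\{0\}$. Indeed, if $f\in \mathcal{H}$ is $G$-invariant, then since $G$ acts transitively on $G/\Lambda$, the function $f$ is constant $\lambda$-almost everywhere; but $\Lambda$ has infinite covolume, so $\lambda(G/\Lambda)=+\infty$, and the only constant function in $L^2$ of an infinite measure space is the zero function. This is where the infinite covolume hypothesis enters.

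Next I use the assumption on $A$. Since $A$ is $\Gamma$-invariant, $\lambda(gA\Delta A)=0$ for every $g\in\Gamma$, which translates to $\rho(g)1_A=1_A$ in $L^2$ for every $g\in\Gamma$. Because $\lambda(A)$ is finite and positive, the vector $1_A$ lies in $\mathcal{H}$ and is nonzero, so $1_A\in\mathcal{H}^\Gamma\setminus\{0\}$.

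Finally I invoke \Cref{rig}: since $G$ has no compact factor, $\Gamma$ has unbounded projections in all (noncompact) factors of $G$, and $\mathcal{H}^G=\{0\}$ by the previous paragraph, the lemma yields $\mathcal{H}^\Gamma=\{0\}$, contradicting the existence of the nonzero $\Gamma$-invariant vector $1_A$. The only step one needs to be slightly careful with is the transitivity argument showing $\mathcal{H}^G=\{0\}$: it uses that an $L^2$ function invariant under a transitive group action is almost-everywhere constant, which follows from, e.g., decomposing $G/\Lambda$ into orbits (here a single orbit) and applying the definition of $G$-invariance almost everywhere; this is routine, so I do not expect any real obstacle.
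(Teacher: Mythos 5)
Your proof is correct and follows essentially the same route as the paper: both consider the quasi-regular representation of $G$ on $L^2(G/\Lambda,\lambda)$, observe that $1_A$ would be a nonzero $\Gamma$-fixed vector, and apply \Cref{rig} together with the fact that a $G$-fixed vector is a.e.\ constant, hence zero since $\lambda$ is infinite. The only cosmetic difference is that you verify $\mathcal{H}^G=\{0\}$ up front and apply \Cref{rig} directly, whereas the paper applies its contrapositive; the content is identical.
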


\begin{proof}[Proof of \Cref{quasi-erg}]
Argue by contradiction assuming there exists a $\Gamma$ invariant  subset  $A \subseteq G/ \Lambda $  such that  $\lambda(A) \in (0,+\infty)$ for some $G$-invariant Radon measure $\lambda$ on $G/\Lambda$. Consider the regular unitary representation of $G$ on $L^2(G/\Lambda)$, given by the formula $g.f =f(g^{-1}.)$. The characteristic function $1_{A}\in L^2(G/\Lambda)$  is a non-zero fixed point for the action of $\Gamma$. As $G$ has no compact factor,  \Cref{rig} and the assumption on  $\Gamma$ imply there exists a non-zero fixed point $\varphi \in L^2(G/\Lambda)$ for the action of $G$. Such a function is $\lambda$-a.e. constant, implying that $\lambda$ has finite mass. Absurd. 

\end{proof}

We can now conclude with the

\smallskip

\begin{proof}[Proof of \Cref{recip}]
Assume first that the group $G$ has no compact factor. If the probability measure $\mu$ is symmetric, then convergence (2) comes from  \Cref{quasi-erg} and \Cref{TH1}. If there is no assumption of symmetry, we still get the convergence in Cesàro average (1) via the remark following  \Cref{TH1}.

We now explain how to reduce \Cref{recip} to the case 
  where $G$ has no compact factor. Denote by $G_{1}, \dots, G_{s}$ the factors of $G$, and $\pi$  the  induced finite cover of $G$, i.e.  $\pi : G_{1}\times \dots \times G_{s}\rightarrow G, (g_{1}, \dots, g_{s})\mapsto g_{1}\dots g_{s}$. 
There exists a probability measure $\widetilde{\mu}$ on $\Pi_{i=1}^sG_{i}$ whose support is $\pi^{-1}(\supp \mu)$ and such that the $\widetilde{\mu}$-walk on $\Pi_{i=1}^sG_{i}/ \pi^{-1}(\Lambda)$ lifts the $\mu$-walk on $G/ \Lambda$. It is enough to show escape of mass  for this $\widetilde{\mu}$-walk. Denote by $G_{1},\dots, G_{k}$ the non compact factors of $G$  and  $p : \Pi_{i=1}^sG_{i} \rightarrow \Pi_{i=1}^kG_{i}, (g_{i})_{i\leq s}\mapsto (g_{i})_{i\leq k}$ the projection on their product (notice that $k\geq 1$ otherwise $G$ would not have a discrete subgroup of infinite covolume). Then the projection $p(\pi^{-1}(\Lambda))$ is a discrete subgroup of infinite covolume in $\Pi_{i=1}^kG_{i}$. It is enough to prove escape of mass for the projection  $p_{\ast}\widetilde{\mu}$ on $\Pi_{i=1}^kG_{i}$. Note that this probability measure generates a group with unbounded projections in the $G_{i}$'s for $i=1, \dots, k$. Hence, we have reduced \Cref{recip} to the case of a group with no compact factor, which finishes the proof.

\end{proof}

\bibliographystyle{abbrv}

\bibliography{bibliographie}

\end{document}